\newtheorem{theorem}{Theorem}[section]
\newtheorem{corollary}[theorem]{Corollary}
\newtheorem{proposition}[theorem]{Proposition}
\newtheorem{lemma}[theorem]{Lemma}
\numberwithin{equation}{section}
\begin{document}

\begin{frontmatter}
\title[Directed Paths in a Slit]{Skew Schur Function Representation of Directed Paths in a Slit}

\begin{aug}
    \author{\fnms{Anum} \snm{Khalid}
    \ead[label=e1]{anum.khalid@qmul.ac.uk}}
    \address{School of Mathematical sciences, \\Queen Mary University of London,\\ Mile end Road, London E1 4NS.\\
             United Kingdom\\
             \printead{e1}}
    \and
    \author{\fnms{Thomas} \snm{Prellberg}
            \ead[label=e3]{t.prellberg@qmul.ac.uk}%
            \ead[label=u1,url]{http://www.qmul.ac.uk}}
    \address{School of Mathematical sciences, \\ Queen Mary University of London, \\ Mile end Road, London E1 4NS.\\
             United Kingdom\\
             \printead{e3}\\
             \printead{u1}}
\end{aug}

\begin{abstract}
In this work, we establish a general relationship between the enumeration of weighted directed paths and skew Schur functions, extending work by Bousquet-M\'elou,
who expressed generating functions of discrete excursions in terms of rectangular Schur functions.
\end{abstract}

\begin{keyword}[class=AMS]
\kwd[Primary ]{05AXX}
\kwd{05Exx}
\kwd[; secondary ]{05A15}
\kwd{05E05}
\kwd{05E10}
\end{keyword}

\begin{keyword}
\kwd{schur functions}
\kwd{directed paths}
\end{keyword}

\end{frontmatter}

\section{Introduction and Statement of Results}

We consider the enumeration of directed paths constrained to lie within a strip, with steps taken from a finite set of allowed steps having prescribed weights. Previous work in this context on bounded excursions found generating function expressions in terms of rectangular Schur functions \cite{Bousquet2006}. In related work \cite{2013arXiv1303.2724B}, bounded meanders were studied using a transfer matrix approach. Both meanders and excursions start at height zero, but while excursions are restricted to also end at height zero, meanders have no such endpoint restriction. In this paper, we extend these results by considering bounded paths starting and ending at arbitrary given heights. We express their generating functions in terms of skew Schur functions, and provide an expansion of these skew Schur functions in terms of a linear combination of Schur functions.
Related work has appeared in \cite{MR2735330}; in Theorem 4 of that paper the authors arrive at an expression using a generating variable for the endpoint position. An alternative proof of our Corollary \ref{schurcorol} could in principle be obtained from that expression.\begin{figure}[ht!]
\centering
\includegraphics[width=0.99\textwidth]{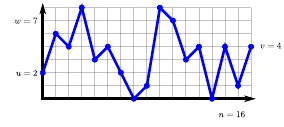}
\caption{Directed path of length $n=16$ with northeast steps $A=\{1,3,4,6\}$ and southeast steps $B=\{1,2,3,4\}$ ($\alpha=6$ and $\beta=4$) in a slit of width $w=7$, starting at height $u=2$ and ending at height $v=4$. 
}
\label{gd}
\end{figure}

Consider a directed $n$-step path in the slit $ \mathbb{Z} \times \{0,1,\cdots,w\}$ of width $w$, starting at point $(0,u)$ and ending at point $(n,v)$, taking its steps from $\{1\} \times S$, where $S \subset \mathbb{Z}$ is a finite set. For simplicity we call $S$ the step set. Figure \ref{gd} shows such a path. 
We separate the step set $S$ into sets of up and down steps by defining $A=S \cap \mathbb{Z}_0^+$ and $B=-(S \backslash A)$, where we have included the horizontal step in the set $A$.  Every up step of height $a\in A$ is weighted by a weight $p_a$, and every down step of height $b\in B$ is weighted by a weight $q_b$. We denote the maximum of $A$ and $B$ by $\alpha$ and $\beta$, respectively, and assume that the weights $p_{\alpha}$ and $q_{\beta}$ are nonzero. The weight $\omega_{\varphi}$ of a path $\varphi$ is then the product of the weights of all the steps in the path. The introduction of weights implies that by assigning a weight of zero to any integer not appearing in $S$ we can without loss of generality consider $A=\{0,1,\ldots,\alpha\}$ and $B=\{1,2,\ldots,\beta\}$. 

Given a step set $S$ and associated step weights, let $\Omega_{(u,v),n}^{w,\alpha,\beta}$ be the set of directed $n$-step paths in a strip of width $w$ starting at $(0,u)$ and ending at $(n,v)$. The main object of this paper is the generating function of directed weighted paths
\begin{equation}
G_{(u,v)}^{w,\alpha,\beta}(t)=\sum_{n=0}^\infty t^n\sum_{\varphi\in\Omega_{(u,v),n}^{w,\alpha,\beta}}\omega_{\varphi}\;.
\end{equation}

Having a finite strip width automatically implies that the generating function is rational, as the enumeration problem can be cast as a random walk problem on a finite graph and thus the generating function can be found from its transition matrix. This approach has for example been followed in \cite{2013arXiv1303.2724B}. One can easily deduce some complexity results, such as giving upper bounds on the degree of the polynomials appearing in the rational generating function, and also compute $G_{(u,v)}^{w,\alpha,\beta}(t)$ for specific parameter values. However, computing a general expression is considerably more difficult, with only some results available for meanders, {\it i.e.}~$G_{(0,v)}^{w,\alpha,\beta}(t)$ \cite{2013arXiv1303.2724B}. Following along ideas from \cite{Bousquet2006}, where an explicit expression was obtained for excursions, {\it i.e.}~$G_{(0,0)}^{w,\alpha,\beta}(t)$, our approach enables us to provide a general solution for $G_{(u,v)}^{w,\alpha,\beta}(t)$ in Theorem \ref{theorem1}.

\begin{theorem}
The generating function $G_{(u,v)}^{w,\alpha,\beta}(t)$ of directed weighted paths is given by
\begin{equation}
G_{(u,v)}^{w,\alpha,\beta}(t)=\frac{(-1)^{1-\alpha}}{tp_{\alpha}}\frac{s_{(w^{\alpha},u,0^{\beta-1})/(v,0^{\alpha+\beta-1})}(\bar{z})}{s_{((w+1)^{\alpha},0^{\beta})}(\bar{z})}\;,
\label{skewresult}
\end{equation}
where $\bar z$ are the $\alpha+\beta$ roots of
\[
K(t,z)=1-t\sum_{a \in A}p_az^a-t\sum_{b\in B}q_bz^{-b}\;,
\]
and $s_{\lambda/\mu}(z)$ is a skew Schur function.
\label{theorem1}
\end{theorem}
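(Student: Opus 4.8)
The plan is to realise $G_{(u,v)}^{w,\alpha,\beta}(t)$ as a single entry of an inverse matrix, apply Cramer's rule, and identify the two resulting determinants as Schur functions by means of the dual Jacobi--Trudi identity. Building a path step by step is a walk on the finite graph with vertex set $\{0,1,\dots,w\}$, so $G_{(u,v)}^{w,\alpha,\beta}(t)=\bigl[(I-tT)^{-1}\bigr]_{u,v}$, where $T$ is the $(w+1)\times(w+1)$ transfer matrix with $T_{ij}=p_{j-i}$ for $0\le j-i\le\alpha$, $T_{ij}=q_{i-j}$ for $1\le i-j\le\beta$, and $T_{ij}=0$ otherwise. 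The key structural point is that $I-tT$ is a banded Toeplitz matrix: putting $c_m=[z^m]\bigl(z^\beta K(t,z)\bigr)$ one checks $(I-tT)_{ij}=c_{\beta+j-i}$, and $P(z):=z^\beta K(t,z)=-tp_\alpha\prod_{\ell=1}^{\alpha+\beta}\bigl(z-\bar z_\ell\bigr)$ is precisely the polynomial whose roots are the $\bar z_\ell$.

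By Cramer's rule, $\bigl[(I-tT)^{-1}\bigr]_{u,v}=(-1)^{u+v}\det\!\bigl((I-tT)^{(v,u)}\bigr)/\det(I-tT)$, where $(I-tT)^{(v,u)}$ is the matrix obtained by deleting row $v$ and column $u$. Vieta's formulas rewrite the Toeplitz entries in terms of elementary symmetric functions of the roots, $c_{\beta+j-i}=-tp_\alpha(-1)^{\alpha+i+j}e_{\alpha+i-j}(\bar z)$. Substituting this into numerator and denominator and pulling the scalar $-tp_\alpha$ together with the signs $(-1)^{\alpha}$, $(-1)^{i}$, $(-1)^{j}$ out of the rows and columns leaves two determinants of the shape $\det\!\bigl(e_{\alpha+i-j}(\bar z)\bigr)$, one of size $w+1$ and one of size $w$ with row index set $\{0,\dots,w\}\setminus\{v\}$ and column index set $\{0,\dots,w\}\setminus\{u\}$. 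The sign and scalar bookkeeping works out: the factor $(-1)^{u+v}$ produced when pulling row and column signs out of the minor cancels the Cramer sign, while the single leftover factor $-tp_\alpha(-1)^\alpha$ coming from the mismatch in the two determinant sizes is exactly $(-1)^{1-\alpha}/(tp_\alpha)$.

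It then remains to apply the dual Jacobi--Trudi identity, in its ordinary form $s_\lambda=\det(e_{\lambda'_i-i+j})$ and its skew form $s_{\lambda/\mu}=\det(e_{\lambda'_i-\mu'_j-i+j})$. For the denominator this is immediate: the full banded Toeplitz determinant of size $w+1$ has constant index shift, hence equals the rectangular Schur function $s_{((w+1)^\alpha,0^\beta)}(\bar z)$. For the numerator one computes the conjugate partitions $\lambda'=((\alpha+1)^{u},\alpha^{w-u})$ of $\lambda=(w^\alpha,u,0^{\beta-1})$ and $\mu'=(1^{v})$ of $\mu=(v,0^{\alpha+\beta-1})$, and observes that the strictly decreasing sequence $(\lambda'_i-i)_i$ and the strictly increasing sequence $(j-\mu'_j)_j$ run over exactly $\{\alpha-w,\dots,\alpha\}\setminus\{\alpha-u\}$ and $\{0,\dots,w\}\setminus\{v\}$ --- which, after a transposition, are the column and row index shifts of the minor. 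This identifies the numerator determinant with $s_{(w^\alpha,u,0^{\beta-1})/(v,0^{\alpha+\beta-1})}(\bar z)$, and collecting the factors yields \eqref{skewresult}. One works throughout with the symmetric functions $e_k(\bar z)$, which by Vieta are polynomials in $t$ and the step weights, so this is an identity of rational functions of $t$ even though the individual roots $\bar z_\ell$ are only algebraic.

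The main obstacle is exactly this last identification: pinning down which skew shape is produced by deleting one row and one column of the banded Toeplitz matrix --- that is, carrying out the conjugate-partition computation and the matching of index sets --- while simultaneously making sure every sign and the factor $tp_\alpha$ come out precisely as in \eqref{skewresult}. A variant closer to Bousquet-M\'elou's treatment of excursions would instead set up the kernel-method functional equation $P(z)H(z)=z^{u+\beta}+R(z)$ for $H(z)=\sum_{k=0}^w h_k z^k$, with $R$ supported on the powers $z^0,\dots,z^{\beta-1}$ and $z^{w+\beta+1},\dots,z^{w+\alpha+\beta}$, solve the $\alpha+\beta$ linear equations obtained by substituting $z=\bar z_\ell$, and read off $h_v$; this produces the same determinants in bialternant form, but extracting the single coefficient $h_v$ is less direct than Cramer's rule.
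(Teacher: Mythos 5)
Your proposal is correct and follows essentially the same route as the paper: both reduce to Cramer's rule applied to the banded matrix with entries (signed) $e_{\alpha+i-j}(\bar z)$ obtained from the kernel via Vieta, followed by the dual Jacobi--Trudi identity with $\lambda'=((\alpha+1)^{u},\alpha^{w-u})$ and $\mu'=(1^{v})$ to identify the two determinants with $s_{((w+1)^{\alpha},0^{\beta})}(\bar z)$ and $s_{(w^{\alpha},u,0^{\beta-1})/(v,0^{\alpha+\beta-1})}(\bar z)$. The only (cosmetic) difference is at the front end: you obtain the linear system directly as the entry $\bigl[(I-tT)^{-1}\bigr]_{u,v}$ of the inverse transfer matrix, whereas the paper derives the same banded system from the functional equation for $\sum_{v}G_{(u,v)}(t)z^{v}$ via the kernel method; the resulting minors (delete row $v$, column $u$) agree up to transposition, and your sign and scalar bookkeeping reproduces the paper's prefactor $(-1)^{1-\alpha}/(tp_{\alpha})$.
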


Schur functions form a linear basis for the space of all symmetric polynomials \cite{stanley_fomin_1999}. We can therefore express the skew Schur function in Theorem \ref{theorem1} as a linear combination of Schur functions.
\begin{corollary}
\label{schurcorol}
The generating function $G_{(u,v)}^{w,\alpha,\beta}(t)$ of directed weighted paths is given by
\[
G_{(u,v)}^{w,\alpha,\beta}(t)=(-1)^{1-\alpha}\frac{1}{tp_{\alpha}}\frac{\sum\limits_{l=0}^{r}s_{(w^{\alpha-1},w-(v-u)_+-l,(u-v)_++l,0^{\beta-1})}(\bar{z})}{s_{((w+1)^{\alpha},0^{\beta})}(\bar{z})}\;,
\]
where $r=\min(u,v,w-u,w-v)$.
\end{corollary}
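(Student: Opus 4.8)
The plan is to reduce the skew Schur function in Theorem~\ref{theorem1} to a linear combination of ordinary Schur functions, exploiting that its inner shape $\mu=(v,0^{\alpha+\beta-1})$ is a single row. Since $s_{(v)}=h_v$, the complete homogeneous symmetric function of degree $v$, and $s_{\lambda/\mu}=\sum_\nu c^{\lambda}_{\mu,\nu}\,s_\nu$ with $c^{\lambda}_{\mu,\nu}=c^{\lambda}_{\nu,\mu}$, the Pieri rule $h_v\,s_\nu=\sum_\kappa s_\kappa$ (summed over $\kappa$ for which $\kappa/\nu$ is a horizontal strip of size $v$) shows that $c^{\lambda}_{(v),\nu}$ equals $1$ precisely when $\lambda/\nu$ is a horizontal $v$-strip and $0$ otherwise. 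Hence
\[
s_{\lambda/(v)}(\bar z)=\sum_{\nu}s_\nu(\bar z),
\]
the sum ranging over partitions $\nu$ with $\lambda_{i+1}\le\nu_i\le\lambda_i$ for all $i$ and $|\lambda|-|\nu|=v$. No term with more than $\alpha+\beta$ parts arises, since $\nu\subseteq\lambda=(w^\alpha,u,0^{\beta-1})$.

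Next I would pin down the admissible $\nu$ explicitly. For $\lambda=(w^\alpha,u,0^{\beta-1})$ the interlacing conditions force $\nu_1=\dots=\nu_{\alpha-1}=w$ and $\nu_{\alpha+2}=\dots=0$, leaving only $\nu_\alpha$ and $\nu_{\alpha+1}$ free, subject to $u\le\nu_\alpha\le w$, $0\le\nu_{\alpha+1}\le u$, and --- so that $\nu$ is a partition --- $\nu_\alpha\ge\nu_{\alpha+1}$. The size condition reads $\nu_\alpha+\nu_{\alpha+1}=w+u-v$, so the admissible $\nu$ form a one-parameter family. Parametrising it by $\nu_{\alpha+1}=(u-v)_++l$ and $\nu_\alpha=w-(v-u)_+-l$ (consistent with the size condition since $(v-u)_+-(u-v)_+=v-u$), the defining inequalities become $0\le l\le\min(u,v,w-u,w-v)=r$, while the further condition $2l\le w-|u-v|$ needed for $\nu$ to be a partition is then automatically satisfied, since $2\min(a,b)\le a+b$. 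Substituting these $\nu$ into the displayed sum, dividing by $s_{((w+1)^\alpha,0^\beta)}(\bar z)$, and inserting the prefactor of \eqref{skewresult} yields the stated expansion.

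The argument is essentially bookkeeping, so the main thing to get right is the reparametrisation: I would check carefully that $l$ covers exactly $\{0,1,\dots,r\}$, that the associated $\nu$ is a genuine partition contained in $\lambda$ at both endpoints of that range, and in particular that the partition inequality $\nu_\alpha\ge\nu_{\alpha+1}$ --- together with the lower bound $l\ge0$ --- is forced by the strip conditions rather than imposing anything extra. A secondary point worth spelling out is the Pieri step itself: although standard, making explicit that a one-row inner shape collapses the Littlewood--Richardson expansion of $s_{\lambda/\mu}$ to a sum over horizontal strips is what keeps the corollary self-contained.
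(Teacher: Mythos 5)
Your proposal is correct and takes essentially the same route as the paper: the paper's Lemma \ref{lemma} likewise expands $s_{(w^{\alpha},u,0^{\beta-1})/(v,0^{\alpha+\beta-1})}$ via the single-row (Pieri-type) rule \eqref{cor7159} and arrives at exactly the same one-parameter family of partitions indexed by $l=0,\dots,r$. The only difference is presentational: the paper identifies the admissible partitions pictorially with a case split $u\le v$ versus $u>v$, whereas you obtain them uniformly from the interlacing inequalities and the $(\cdot)_+$ parametrisation.
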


At this point we should like to remark that numerical experimentation with Maple led us to conjecture Corollary \ref{schurcorol} first, however we did not find a direct proof that avoided skew Schur functions.

Excursions, bridges, and meanders are all contained as special cases. For excursions we recover the result given in \cite{Bousquet2006},
\begin{equation}
G_{(0,0)}^{w,\alpha,\beta}(t)=G_{(w,w)}^{w,\alpha,\beta}(t)=\frac{(-1)^{1-\alpha}}{tp_{\alpha}}\frac{s_{(w^{\alpha},0^{\beta})}(\bar{z})}{s_{((w+1)^{\alpha},0^{\beta})}(\bar{z})}\;,
\end{equation}
and for bridges we find
\begin{equation}
G_{(0,w)}^{w,\alpha,\beta}(t)=\frac{(-1)^{1-\alpha}}{tp_{\alpha}}\frac{s_{(w^{\alpha-1},0^{\beta+1})}(\bar{z})}{s_{((w+1)^{\alpha},0^{\beta})}(\bar{z})}
\end{equation}
and
\begin{equation}
G_{(w,0)}^{w,\alpha,\beta}(t)=\frac{(-1)^{1-\alpha}}{tp_{\alpha}}\frac{s_{(w^{\alpha+1},0^{\beta-1})}(\bar{z})}{s_{((w+1)^{\alpha},0^{\beta})}(\bar{z})}\;,
\end{equation}
which are related by obvious symmetry. Similarly, for meanders we find
\begin{equation}
G_{(0,v)}^{w,\alpha,\beta}(t)=\frac{(-1)^{1-\alpha}}{tp_{\alpha}}\frac{s_{(w^{\alpha-1},w-v,0^{\beta})}(\bar{z})}{s_{((w+1)^{\alpha},0^{\beta})}(\bar{z})}\;,
\end{equation}
\begin{equation}
G_{(w,v)}^{w,\alpha,\beta}(t)=\frac{(-1)^{1-\alpha}}{tp_{\alpha}}\frac{s_{(w^{\alpha},w-v,0^{\beta-1})}(\bar{z})}{s_{((w+1)^{\alpha},0^{\beta})}(\bar{z})}\;,
\end{equation}
\begin{equation}
G_{(u,w)}^{w,\alpha,\beta}(t)=\frac{(-1)^{1-\alpha}}{tp_{\alpha}}\frac{s_{(w^{\alpha-1},u,0^{\beta})}(\bar{z})}{s_{((w+1)^{\alpha},0^{\beta})}(\bar{z})}\;,
\end{equation}
\begin{equation}
G_{(u,0)}^{w,\alpha,\beta}(t)=\frac{(-1)^{1-\alpha}}{tp_{\alpha}}\frac{s_{(w^{\alpha},u,0^{\beta-1})}(\bar{z})}{s_{((w+1)^{\alpha},0^{\beta})}(\bar{z})}\;.
\end{equation}

We prove Theorem \ref{theorem1} and Corollary \ref{schurcorol} in a sequence of steps in Section 2. Section 3 contains examples of some specific step sets.

\section{Proofs}

We consider the generating function
\begin{equation}
G(t,z)=\sum_{v=0}^{w}G_{(u,v)}^{w,\alpha,\beta}(t)z^v\;,
\label{gf}
\end{equation}
where for convenience we drop the indices $w$, $\alpha$, $\beta$ and $u$ on the left-hand side. We present a functional equation satisfied by $G(t,z)$ and define the notion of the kernel for this functional equation (this is $K(t,z)$ in the statement of Theorem \ref{theorem1}), which up to a prefactor is a polynomial in $z$ of degree $\alpha+\beta$. Coefficients of the kernel can be interpreted in terms of elementary symmetric functions of the roots, which will be central in our approach. The functional equation is  equivalent to setting up a system of linear equations, and using elementary symmetric functions will allow us to employ the Jacobi-Trudi formula to express the solution of the system in terms of skew Schur functions, leading to the expression in Theorem \ref{theorem1}.
\begin{proposition}
The generating function $G(t,z)$ satisfies the functional equation
\begin{multline}
G(t,z)=z^u+t\left(\sum_{a \in A} p_{a} z^a+\sum_{b \in B}\frac{q_{b}}{z^b}\right)G(t,z)\\-t\sum_{j=1}^{\infty}z^{w+j}\sum_{a \geq j}p_aG_{(u,w-a+j)}(t)-t\sum_{j=1}^{\infty}z^{-j}\sum_{b \geq j}q_bG_{(u,b-j)}(t)\;,
\label{feq}
\end{multline}
where $G_{(u,v)}(t)=G_{(u,v)}^{w,\alpha,\beta}(t)$. 
\end{proposition}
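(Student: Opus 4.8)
\medskip
\noindent\textbf{Proof proposal.}
The plan is to prove \eqref{feq} by a last-step decomposition, with the variable $z$ tracking the endpoint height while the starting height $u$ stays fixed. Every path counted by $G_{(u,v)}(t)$ of length $n\ge 1$ arises in a unique way by appending a single step to a shorter path: removing its final step leaves a path in $\Omega_{(u,v'),n-1}^{w,\alpha,\beta}$ for some $v'\in\{0,\dots,w\}$, and the removed step carries height $v'$ to height $v$, being either an up step of height $a=v-v'\in A$ with weight $p_a$ (if $v\ge v'$) or a down step of height $b=v'-v\in B$ with weight $q_b$ (if $v<v'$). Conversely, appending a step from $S$ to a path in $\Omega_{(u,v'),n-1}^{w,\alpha,\beta}$ yields a path lying in the slit exactly when the new endpoint lies in $\{0,\dots,w\}$, since all earlier vertices do so already. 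Translating this bijection into generating functions, and isolating the length-zero contribution $z^u$ (the empty path, present precisely when $v=u$), gives
\[
G(t,z)-z^u = t\sum_{v'=0}^{w} G_{(u,v')}(t)\Biggl(\sum_{\substack{a\in A\\ 0\le v'+a\le w}} p_a z^{v'+a} + \sum_{\substack{b\in B\\ 0\le v'-b\le w}} q_b z^{v'-b}\Biggr).
\]

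The next step is to remove the range restrictions on $a$ and $b$ by completing each inner sum and subtracting the overshoot. For the up steps, $\sum_{a\in A,\,v'+a\le w} p_a z^{v'+a} = z^{v'}\sum_{a\in A}p_a z^a - \sum_{a\in A,\,v'+a>w}p_a z^{v'+a}$; summed against $G_{(u,v')}(t)$ over $v'$, the first piece gives $t\bigl(\sum_{a\in A}p_a z^a\bigr)G(t,z)$, and in the overshoot piece I substitute $j=v'+a-w\ge 1$, so that $v'=w-a+j$ and the constraint $v'\ge 0$ becomes $a\le w+j$ while $v'\le w$ becomes $a\ge j$. Since $G_{(u,v')}(t)$ vanishes whenever $v'\notin\{0,\dots,w\}$, the condition $a\le w+j$ may be dropped, and the overshoot contributes $-t\sum_{j\ge 1}z^{w+j}\sum_{a\ge j}p_a G_{(u,w-a+j)}(t)$, the third term of \eqref{feq}; the $j$-sum is in fact finite because $a\le\alpha$. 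A parallel computation for the down steps, using $j=b-v'\ge 1$ and $v'=b-j$, yields $t\bigl(\sum_{b\in B}q_b z^{-b}\bigr)G(t,z)$ together with $-t\sum_{j\ge 1}z^{-j}\sum_{b\ge j}q_b G_{(u,b-j)}(t)$. Collecting the four pieces produces \eqref{feq}.

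I do not expect a genuine obstacle here; the argument is bookkeeping, and the only points that need care are the reindexings in the second step. One should check carefully that enlarging the range of $a$ (respectively $b$) in the overshoot sums adds only terms in which $G_{(u,\cdot)}$ is evaluated outside $\{0,\dots,w\}$ and hence vanishes, so that the finite sums of the combinatorial decomposition match the formally infinite sums in \eqref{feq}. It is also worth confirming the boundary case $a=0$, the horizontal step included in $A$: then $v'+a>w$ never occurs, consistent with the $j$-sum over $a\ge j$ being empty for every $j\ge 1$, so the horizontal step only contributes to the $\bigl(\sum_{a\in A}p_a z^a\bigr)G(t,z)$ term, as expected.
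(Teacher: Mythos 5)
Your proof is correct and follows essentially the same route as the paper: a last-step decomposition in which steps are first appended without regard to the boundaries, giving the term $t\bigl(\sum_{a\in A}p_az^a+\sum_{b\in B}q_bz^{-b}\bigr)G(t,z)$, and the overshoots past $y=w$ and $y=0$ are then subtracted via the reindexings $j=v'+a-w$ and $j=b-v'$. Your version simply spells out the bookkeeping (and the convention that $G_{(u,v')}(t)$ vanishes outside the slit) more explicitly than the paper does.
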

\begin{proof}
An $n$-step walk is constructed by adding steps from the step set $S$ to an $(n-1)$-step walk, provided $n>0$.
The zero step walk starting and ending at height $u$ is represented by $z^u$. The term $t\left(\sum_{a \in A} p_{a} z^a+\sum_{b \in B}\frac{q_{b}}{z^b}\right)G(t,z)$ corresponds to steps appended without the consideration of violation of boundaries. The steps not allowed are removed by subtracting the terms which account for the steps crossing the strip boundaries at $y=0$ and $y=w$. More precisely, $t\sum_{j=1}^{\infty}z^{w+j}\sum_{a \geq j}p_aG_{(u,w-a+j)}(t)$ corrects overcounting by steps going above the line $y=w$, and $t\sum_{j=1}^{\infty}z^{-j}\sum_{b \geq j}q_bG_{(u,b-j)}(t)$ corrects overcounting by steps going below the line $y=0$.
\end{proof}
Next, we rearrange the functional equation as 
\begin{multline}
\left(1-t\sum_{a \in A} p_{a} z^a-t\sum_{b \in B}\frac{q_{b}}{z^b}\right)G(t,z)=\\z^u-t\sum_{j=1}^{\infty}z^{w+j}\sum_{a \geq j}p_aG_{(u,w-a+j)}(t)-t\sum_{j=1}^{\infty}z^{-j}\sum_{b \geq j}q_bG_{(u,b-j)}(t).
\label{kfeq}
\end{multline}
The prefactor of $G(z,t)$ in (\ref{kfeq}) is called the kernel of the functional equation,
\begin{equation}
K(t,z)=1-t\sum_{a \in A}p_az^a-t\sum_{b \in B}q_bz^{-b}.
\end{equation}
It will be convenient to relate the coefficients of the kernel to elementary symmetric functions.
\begin{lemma}
\label{lemma1}
The kernel $K(t,z)$ can be written as
\begin{equation}
K(t,z)=-tp_{\alpha}\sum_{i=0}^{\alpha+\beta}z^{\alpha-i}(-1)^{i}e_i(z_1,z_2, \ldots, z_{\alpha+\beta})
\end{equation}
where $\bar{z}=z_1,z_2, \ldots, z_{\alpha+\beta}$ are the roots of the kernel $K(t,z)$, and we have
\begin{equation}
-tp_a=-tp_{\alpha}(-1)^{\alpha-a}e_{\alpha-a}(\bar{z})
\label{p-val}
\end{equation}
\begin{equation}
1-tp_0=-tp_{\alpha} (-1)^{\alpha}e_{\alpha}(\bar{z})
\label{p0-val}
\end{equation}
\begin{equation}
-tq_b=-tp_{\alpha}(-1)^{\alpha+b}e_{\alpha+b}(\bar{z})
\label{q-val}
\end{equation}
for $1\le a\le\alpha$ and $1\le b\le\beta$.
\end{lemma}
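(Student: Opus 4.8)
The plan is to reduce the statement to Vieta's formulas (Newton's identities for the coefficients of a polynomial in terms of its roots). First I would clear the negative powers of $z$ from the kernel. Using the normalization $A=\{0,1,\ldots,\alpha\}$, $B=\{1,2,\ldots,\beta\}$ explained in the introduction, multiply $K(t,z)$ by $z^\beta$ to obtain
\[
z^\beta K(t,z)=-tp_\alpha z^{\alpha+\beta}-\cdots-tp_1z^{\beta+1}+(1-tp_0)z^\beta-tq_1z^{\beta-1}-\cdots-tq_\beta,
\]
which is a polynomial in $z$ of degree exactly $\alpha+\beta$, since its leading coefficient $-tp_\alpha$ is nonzero by the standing assumption $p_\alpha\neq0$ (and $t$ is treated as a formal/generic parameter, so the $z_i=z_i(t)$ are algebraic functions of $t$). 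Its roots are precisely the $z_1,\ldots,z_{\alpha+\beta}$ of the kernel, so
\[
z^\beta K(t,z)=-tp_\alpha\prod_{i=1}^{\alpha+\beta}(z-z_i)=-tp_\alpha\sum_{i=0}^{\alpha+\beta}(-1)^i e_i(z_1,\ldots,z_{\alpha+\beta})\,z^{\alpha+\beta-i}.
\]

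Dividing both sides by $z^\beta$ immediately gives the first claimed identity,
\[
K(t,z)=-tp_\alpha\sum_{i=0}^{\alpha+\beta}(-1)^i e_i(\bar z)\,z^{\alpha-i}.
\]
The remaining three formulas then follow by comparing, term by term, the coefficient of each power $z^{\alpha-i}$ in this expression with the corresponding coefficient in the original definition $K(t,z)=1-t\sum_{a\in A}p_az^a-t\sum_{b\in B}q_bz^{-b}$. Concretely: for $1\le a\le\alpha$ set $i=\alpha-a$ to read off $-tp_a=-tp_\alpha(-1)^{\alpha-a}e_{\alpha-a}(\bar z)$; for the constant term set $i=\alpha$ to read off $1-tp_0=-tp_\alpha(-1)^{\alpha}e_{\alpha}(\bar z)$ (the $+1$ coming from the constant $1$ in $K$); and for $1\le b\le\beta$ set $i=\alpha+b$, so $z^{\alpha-i}=z^{-b}$, giving $-tq_b=-tp_\alpha(-1)^{\alpha+b}e_{\alpha+b}(\bar z)$.

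There is no serious obstacle here; the statement is essentially bookkeeping on top of the factorization of a polynomial. The only points that need a little care are (i) justifying that $z^\beta K(t,z)$ genuinely has degree $\alpha+\beta$ and hence exactly $\alpha+\beta$ roots counted with multiplicity, which is where the hypothesis $p_\alpha\neq0$ enters, and (ii) keeping the index shift between $z^a$, $z^{-b}$ and $z^{\alpha-i}$ straight when matching coefficients. One should also note in passing that, since the roots depend on $t$, the identities are to be read as identities of algebraic functions of $t$ (equivalently, as identities valid for all but finitely many values of $t$), which suffices for all later manipulations.
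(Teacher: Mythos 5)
Your proposal is correct and follows essentially the same route as the paper: writing $z^\beta K(t,z)$ as $-tp_\alpha\prod_k(z-z_k)$, expanding via elementary symmetric functions, and comparing coefficients of powers of $z$. The extra care you take about the degree being exactly $\alpha+\beta$ (via $p_\alpha\neq0$) is implicit in the paper but is the same argument.
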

\begin{proof}
Writing the kernel in terms of its roots $\bar z$ we get
\begin{equation}
K(t,z)
=-\frac{tp_{\alpha}}{z^{\beta}}\prod_{k=1}^{\alpha+\beta}(z-z_k)
=-tp_{\alpha}\sum_{i=0}^{\alpha+\beta}z^{\alpha-i}(-1)^{i}e_i(\bar{z})\;,
\end{equation}
where we have introduced the elementary symmetric functions $e_j$ \cite{macdonald1998symmetric} defined by
\begin{equation}
\prod_{k=1}^{n}(z+z_k)=\sum_{j=0}^{n}z^{n-j}e_{j}(\bar{z})\;.
\end{equation}
Comparing coefficients of
\begin{equation}
1-t\sum_{a=0}^{\alpha}p_az^a-t\sum_{b=1}^{\beta}q_bz^{-b}=-tp_{\alpha}\sum_{i=0}^{\alpha+\beta}z^{\alpha-i}(-1)^{i}e_i(\bar{z})
\label{kernel_e}
\end{equation}
for different powers of $z$ completes the proof. 
\end{proof}
\begin{proposition}
The functional equation \eqref{feq} is equivalent to 
\begin{equation}
\sum_{v=0}^{w} \left(\sum_{i=0}^{\alpha+\beta}(-1)^{i}e_iG_{(u,v-\alpha+i)}(t)\right)z^v =-\frac{z^u}{tp_{\alpha}}
\label{syseq}
\end{equation}
\end{proposition}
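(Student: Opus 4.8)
The plan is to convert the rearranged functional equation \eqref{kfeq} into the linear system \eqref{syseq} by inserting the elementary-symmetric-function form of the kernel from Lemma \ref{lemma1} and then comparing coefficients of powers of $z$. First I would write $K(t,z)=-tp_{\alpha}\sum_{i=0}^{\alpha+\beta}z^{\alpha-i}(-1)^{i}e_i(\bar z)$ and multiply by $G(t,z)=\sum_{v=0}^{w}G_{(u,v)}(t)z^{v}$; collecting powers of $z$ and relabelling the summation index as $m=v+\alpha-i$, with the convention that $G_{(u,v)}(t)=0$ for $v\notin\{0,1,\dots,w\}$, gives
\[
K(t,z)G(t,z)=-tp_{\alpha}\sum_{m}z^{m}\left(\sum_{i=0}^{\alpha+\beta}(-1)^{i}e_iG_{(u,m-\alpha+i)}(t)\right).
\]
On the right-hand side of \eqref{kfeq} the two correction sums involve only the powers $z^{w+j}$ and $z^{-j}$ with $j\ge1$, hence only powers $z^{m}$ with $m>w$ or $m<0$, while the term $z^{u}$ contributes the single power $m=u\in\{0,\dots,w\}$. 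Comparing coefficients of $z^{0},\dots,z^{w}$ then yields $-tp_{\alpha}\sum_{i=0}^{\alpha+\beta}(-1)^{i}e_iG_{(u,m-\alpha+i)}(t)=\delta_{m,u}$ for $0\le m\le w$, and multiplying by $z^{m}$ and summing over $m$ from $0$ to $w$ is exactly \eqref{syseq}.

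For the converse direction I would verify that, once \eqref{syseq} holds, the coefficients of the remaining powers $z^{m}$ with $m<0$ or $m>w$ automatically match \eqref{kfeq}, so that \eqref{syseq} is a faithful reformulation of \eqref{feq} rather than a strictly weaker system. Since by Lemma \ref{lemma1} the two written forms of the kernel are the same Laurent polynomial, the coefficient of $z^{m}$ in $K(t,z)G(t,z)$ may equally be computed from $K(t,z)=1-t\sum_{a\in A}p_az^{a}-t\sum_{b\in B}q_bz^{-b}$, which gives $G_{(u,m)}(t)-t\sum_{a\in A}p_aG_{(u,m-a)}(t)-t\sum_{b\in B}q_bG_{(u,m+b)}(t)$ under the same zero-extension convention. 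For $m<0$ this collapses to $-t\sum_{b\ge-m}q_bG_{(u,m+b)}(t)$, which is precisely the coefficient of $z^{-j}$ with $j=-m$ in the lower correction sum of \eqref{kfeq}; symmetrically, for $m>w$ it collapses to $-t\sum_{a\ge m-w}p_aG_{(u,m-a)}(t)$, matching the upper correction sum with $j=m-w$. Hence the two equations carry identical information.

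I expect the only genuine effort to be index bookkeeping: carrying the zero-extension of $G_{(u,v)}(t)$ outside $\{0,\dots,w\}$ correctly through the relabelling, checking the edge cases $w<\alpha$ or $w<\beta$ in which some shifted indices leave the strip, and keeping the signs of Lemma \ref{lemma1} straight when passing between the $e_i$-form and the $p_a,q_b$-form of the kernel (in particular the $i=\alpha$ term, which originates from $1-tp_0$ rather than from a single step weight). None of this is deep; the conceptual content is simply that the ``missing'' coefficient identities hold for free, which is what makes \eqref{syseq} equivalent to \eqref{feq}.
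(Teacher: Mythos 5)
Your proposal is correct and follows essentially the same route as the paper: substitute the elementary-symmetric-function form of the kernel from Lemma \ref{lemma1} into \eqref{kfeq}, use the convention $G_{(u,v)}(t)=0$ outside $0\le v\le w$, and compare powers of $z$. Your explicit check that the coefficients of $z^{m}$ with $m<0$ or $m>w$ reproduce the boundary-correction sums is exactly the cancellation the paper disposes of with ``careful inspection,'' so no new idea is involved.
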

\begin{proof}
We aim to rewrite the functional equation \eqref{kfeq} in terms of elementary symmetric functions instead of weights $p_a, q_b$ and $t$. 
Using Lemma \ref{lemma1}, we find
\begin{multline}
\left(\sum_{i=0}^{\alpha+\beta}z^{\alpha-i}(-1)^{i}e_i\right)\sum_{v=0}^{w}G_{(u,v)}(t)z^v=
-\frac{z^u}{tp_{\alpha}}\\+\sum_{j=1}^{\infty}\sum_{a \geq j}z^{w+j}(-1)^{\alpha-a}e_{\alpha-a}G_{(u,w-a+j)}(t)\\
+\sum_{j=1}^{\infty}\sum_{b \geq j}z^{-j}(-1)^{\alpha+b}e_{\alpha+b}G_{(u,b-j)}(t)\;.
\label{stuff}
\end{multline}
We rewrite the left hand side of \eqref{stuff} as 
\begin{multline}
\left(\sum_{i=0}^{\alpha+\beta}z^{\alpha-i}(-1)^{i}e_i\right)\sum_{v=0}^{w}G_{(u,v)}(t)z^v
=\sum_{v=-\infty}^{\infty}\left(\sum_{i=0}^{\alpha+\beta}(-1)^{i}e_iG_{(u,v-\alpha+i)}(t)\right)z^{v},
\end{multline}
where we have extended the limits of summation over $v$, as $G_{(u,v)}(t)$ is zero if the end point of the path is outside the strip.
Careful inspection of \eqref{stuff} shows that all the powers of $z^v$ with $v<0$ and $v>w$ cancel,
and we are left with with the desired result.
\end{proof}
The boundary corrections in the functional equation have of course been introduced to precisely that effect, as they were added to correct for steps that went beyond the upper and lower boundaries.
\begin{proof}[Proof of Theorem \ref{theorem1}]
Comparing coefficients of $z^v$ for $0 \leq v \leq w$, equation \eqref{syseq} is equivalent to a system of $w+1$ equations given by
\begin{equation}
\tilde{A}x=b\;,
\label{meq}
\end{equation}
where $x$ is the vector of unknowns $G_{(u,v)}(t)$, and $b$ is the column vector on the right hand side with a single non zero entry $-\frac{1}{tp_{\alpha}}$. 
Using the convention that $e_k=0$ if $k<0$ or $k>\alpha+\beta$, $\tilde{A}$ is the coefficient matrix 
\begin{equation}
\tilde{A}=\left((-1)^{\alpha+j-i}e_{\alpha+j-i}\right)_{i,j=0}^w\;,
\end{equation}
so that the non zero entries of $\tilde A$ form a diagonal band. We can evaluate the unknowns $G_{(u,v)}(t)$ for $v=0 \ldots w$, by using Cramer's rule. Before we do this, we first remove the negative signs of the entries in $\tilde{A}$ to write \eqref{meq} in terms of the matrix
\begin{equation}
A=\left(e_{\alpha+j-i}\right)_{i,j=0}^w=\begin{bmatrix}
e_{\alpha} & e_{\alpha+1}& e_{\alpha+2} & \cdots & e_{\alpha+\beta} & \cdots & 0 \\ 
e_{\alpha-1} & e_{\alpha} & e_{\alpha+1} & \cdots & e_{\alpha+\beta-1} & \cdots & 0 \\
e_{\alpha-2} & e_{\alpha-1} & e_{\alpha} & \cdots & e_{\alpha+\beta-2} & \cdots & 0 \\ 
\vdots & \vdots & \vdots & \vdots & \vdots & \ddots & \vdots \\
e_0 & e_1 & e_2 & \cdots & e_{\beta}& \cdots & 0 \\ 
\vdots & \vdots & \vdots & \vdots & \vdots & \ddots & \vdots \\
0 & 0 & 0 & 0 &0& \cdots &e_{\alpha}
\end{bmatrix}\;.
\label{amatrix}
\end{equation}
We accomplish this by applying a transformation given by the diagonal matrix $S$ with entries $(S)_{ii}=(-1)^i$ for $0\leq i\leq w$.
The matrix equation \eqref{meq} will be transformed as 
$S\tilde{A}S^{-1}Sx=Sb$.
We note that $S\tilde{A}S^{-1}=(-1)^{\alpha}A$ and $Sb=(-1)^ub$, so we have
\begin{equation}
(-1)^{\alpha}A(Sx)=(-1)^ub,
\label{tmateq}
\end{equation}
where 
$(Sx)_k=(-1)^k G_{(u,k)}(t)$.
To evaluate $Sx$ using Cramer's rule, let $A_{(u,v)}$ be the matrix formed by replacing column $v$ in $A$ with the column vector $(-1)^\alpha Sb$, which has  $(-1)^{u+1-\alpha}\frac{1}{tp_{\alpha}}$ at position $u$. so that
\begin{equation}
(-1)^vG_{(u,v)}(t)=\frac{|A_{(u,v)}|}{|A|}\,.
\end{equation}
What is left is to compute the determinants $|A|$ and $|A_{(u,v)}|$.
Using the second Jacobi -Trudi formula \cite{macdonald1998symmetric}, which 
expresses the Schur function as a determinant in terms of the elementary symmetric functions as 
\begin{equation}
s_{\lambda}=\det(e_{\lambda_i'+j-i})_{i,j=1}^{l(\lambda')}\;,
\label{jtfs}
\end{equation}
where $\lambda'$ is the partition conjugate to $\lambda$,
we can write $|A|$ in terms of a Schur function $s_\lambda$.
Comparing the determinant in \eqref{jtfs} with the matrix $A$ in \eqref{amatrix}, we can see that  the conjugate partition $\lambda'$  is given by
\begin{equation}
\lambda'=\left(\alpha^{w+1}\right).
\end{equation}
From this we can write $\lambda=((w+1)^{\alpha},0^\beta)$ and so $|A|$ can be written as 
\begin{equation}
|A|=s_{((w+1)^{\alpha},0^{\beta})}(z_1,z_2,\cdots,z_{\alpha+\beta}).
\label{dv}
\end{equation}
Note that we have chosen the convention to let the partition have the same number of parts as we have roots $z_1,z_2,\cdots,z_{\alpha+\beta}$, so that we supplement the partition with zero size parts as needed.

To evaluate the determinant of the matrix $A_{(u,v)}$, we make use of the fact that the only non zero entry in the $v$-th column is  $(-1)^{u+1-\alpha}\frac{1}{tp_{\alpha}}$ and
expand the determinant by that column to get 
\begin{multline}
|A_{(u,v)}|=\frac{(-1)^{2u+v+1-\alpha}}{tp_{\alpha}}\\
\times
\begin{vmatrix}
e_{\alpha}  & \cdots & e_{\alpha+v-1} & e_{\alpha+v+1} & \cdots&  e_{\alpha+\beta} & \cdots & 0 \\ 
e_{\alpha-1}   & \cdots & e_{\alpha+v-2} & e_{\alpha+v} & \cdots & e_{\alpha+\beta-1} & \cdots & 0 \\
e_{\alpha-2}   & \cdots & e_{\alpha+v-3} & e_{\alpha+v-1} & \cdots & e_{\alpha+\beta-2} & \cdots & 0 \\ 
\vdots  & \vdots  & \vdots & \vdots & \ddots & \vdots& \ddots & \vdots \\
e_{\alpha-u+1}   & \cdots & e_{\alpha+v-u} & e_{\alpha+v-u+2} & \cdots & e_{\alpha+\beta-u+1} & \cdots & 0 \\
e_{\alpha-u-1}   & \cdots & e_{\alpha+v-u-2} & e_{\alpha+v-u-1} & \cdots & e_{\alpha+\beta-u-1} & \cdots & 0 \\
e_{\alpha-u-2}   & \cdots & e_{\alpha+v-u-3} & e_{\alpha+v-u-1} & \cdots & e_{\alpha+\beta-u-2} & \cdots & 0 \\  
\vdots & \vdots  & \vdots  & \vdots & \ddots & \vdots & \ddots & \vdots \\
e_0  & \cdots & e_{v-1} & e_{v+1}& \cdots & e_{\beta} & \cdots & 0 \\ 
\vdots  & \vdots & \vdots & \vdots & \ddots & \vdots & \ddots & \vdots \\
0 & 0  & 0 &0 & \cdots &0 & \cdots & e_{\alpha}
\end{vmatrix}_{[w]}\;,
\label{remm}
\end{multline}
where the indices of $e_k$ increase by $2$ from the $(v-1)$-st to the $v$-th column.

Using the second Jacobi -Trudi formula \cite{macdonald1998symmetric} for skew Schur functions,
\begin{equation}
s_{\lambda/\mu}=\det(e_{\lambda_i'-\mu_j'+j-i})_{i,j=1}^{l(\lambda')}\;,
\label{ssjtf}
\end{equation}
we can express the determinant in \eqref{remm} by a skew Schur function. We find  
\[\lambda'=(\alpha+1^u,\alpha^{w-u})\quad\mbox{and}\quad\mu'=(1^v,0^{w-v})\;,\] 
and hence 
\begin{equation}
\lambda=(w^{\alpha},u,0^{\beta-1})
\quad\mbox{and}\quad
\mu=(v,0^{\alpha+\beta-1}),
\end{equation}
where we have again added zero size parts to follow the convention established above.

\begin{figure}[h!]
\centering
\includegraphics[width=0.5\textwidth]{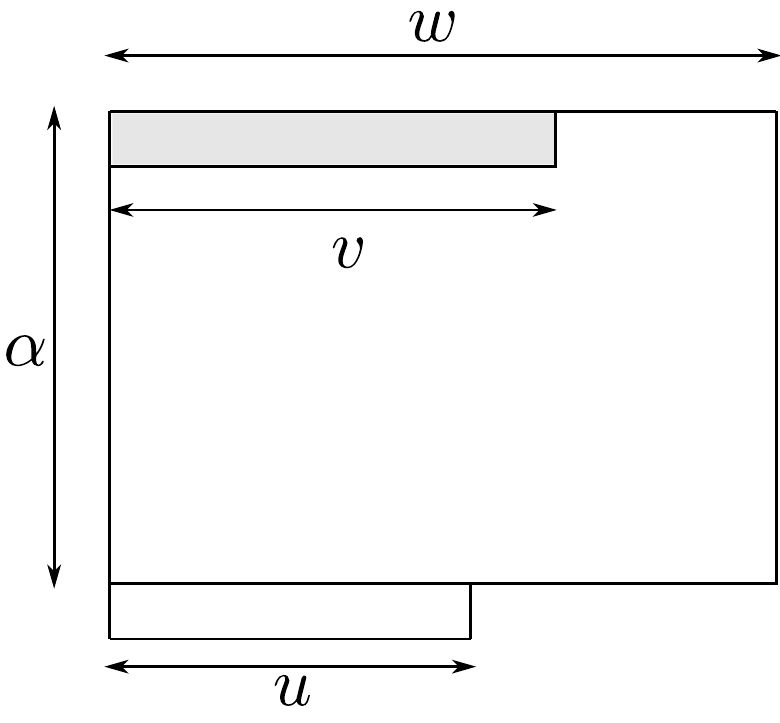}
\caption{Skew partition $\lambda/\mu=(w^{\alpha},u,0^{\beta-1})/(v,0^{\alpha+\beta-1})$ for the skew Schur function related to $\det A_{(u,v)}$. Here and in what follows we employ the `British' convention that the parts of the partition are depicted such the the largest part is at the top and the smallest one at the bottom. Note that we do not show parts of zero size.}
\label{figurethatneedstobementioned}
\end{figure}

A pictorial representation of the skew partition is given in Figure \ref{figurethatneedstobementioned}. We see that the associated skew partition is given by a rectangle of size $w\times\alpha$ which has a row of size $u$ added below and a row of size $v$ removed from the top row.
The corresponding skew Schur function is
$s_{(w^{\alpha},u,0^{\beta-1})/(v,0^{\alpha+\beta-1})}(\bar{z})$,
and therefore
\begin{equation}
|A_{(u,v)}|=(-1)^{v+1-\alpha}\frac{1}{tp_{\alpha}}s_{(w^{\alpha},u,0^{\beta-1})/(v,0^{\alpha+\beta-1})}(\bar{z})\;.
\end{equation}
Together with the expression of $|A|$ from \eqref{dv} we can write that $G_{(u,v)}$ is given by
\begin{equation}
(-1)^vG_{(u,v)}(t)=\frac{|A_{(u,v)}|}{|A|}=(-1)^{v+1-\alpha}\frac{1}{tp_{\alpha}}\frac{
s_{(w^{\alpha},u,0^{\beta-1})/(v,0^{\alpha+\beta-1})}(\bar{z})}{s_{((w+1)^{\alpha},0^\beta)}(\bar{z})}\;,
\end{equation}
which gives \eqref{skewresult} as needed.
\end{proof}
To prove Corollary \ref{schurcorol}, we need a technical lemma expanding the skew Schur function occurring in Theorem \ref{theorem1} in terms of Schur functions.
\begin{lemma}
\label{lemma}
Let $\alpha,\beta,w>0$. Then for $0\leq u,v\leq w$ we have
\begin{multline}
s_{(w^{\alpha},u,0^{\beta-1})/(v,0^{\alpha+\beta-1})}(z_1,\ldots,z_{\alpha+\beta})=\\
\sum_{l=0}^{r}s_{(w^{\alpha-1},w-(v-u)_+-l,(u-v)_++l,0^{\beta-1})}(z_1,\ldots,z_{\alpha+\beta}),
\label{skewtoschur}
\end{multline}
where $r=\min(u,v,w-u,w-v)$. 
\end{lemma}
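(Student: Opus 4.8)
The plan is to evaluate the skew Schur function on the left-hand side via the Littlewood–Richardson rule (or equivalently by iterating the Pieri rule), and to show that the resulting multiset of straight-shape Schur functions is exactly the telescoping family on the right-hand side. The shape $\lambda/\mu = (w^\alpha,u,0^{\beta-1})/(v,0^{\alpha+\beta-1})$ has a very simple $\mu$: only the single part $\mu_1 = v$ is nonzero. So the coefficient $c^\lambda_{\mu\nu}$ in $s_{\lambda/\mu} = \sum_\nu c^\lambda_{\mu\nu}\, s_\nu$ is, by the Littlewood–Richardson rule, the number of LR skew tableaux of shape $\lambda/\nu$ and content $\mu = (v)$ — that is, skew tableaux with $v$ boxes all labelled $1$, forming a horizontal strip. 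Equivalently, by the Pieri rule applied in reverse, $c^\lambda_{(v)\nu} = 1$ precisely when $\lambda$ is obtained from $\nu$ by adding a horizontal strip of size $v$, and $0$ otherwise. Hence the task reduces to enumerating the partitions $\nu$ (with at most $\alpha+\beta$ parts, fitting inside the relevant bounding box so that $s_\nu$ is nonzero on $\alpha+\beta$ variables) such that $\lambda \setminus \nu$ is a horizontal strip of size $v$.

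The main combinatorial step is then purely about partition shapes. Writing $\lambda = (w,w,\ldots,w,u)$ with $\alpha$ copies of $w$ followed by $u$ (and then zeros), I would argue that for $\lambda/\nu$ to be a horizontal strip, $\nu$ must interlace $\lambda$, i.e. $\lambda_{i+1} \le \nu_i \le \lambda_i$ for all $i$. Given the shape of $\lambda$, this forces $\nu_1 = \cdots = \nu_{\alpha-1} = w$, then $w \ge \nu_\alpha \ge u$ and $u \ge \nu_{\alpha+1} \ge 0$, and $\nu_i = 0$ for $i > \alpha+1$. So $\nu = (w^{\alpha-1}, \nu_\alpha, \nu_{\alpha+1}, 0^{\beta-1})$ with $u \le \nu_\alpha \le w$ and $0 \le \nu_{\alpha+1} \le u$. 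The size constraint $|\lambda| - |\nu| = v$ reads $(w - \nu_\alpha) + (u - \nu_{\alpha+1}) = v$. Parametrising by $l := u - \nu_{\alpha+1} \ge 0$, this gives $\nu_{\alpha+1} = u - l$ and $\nu_\alpha = w - v + l$, i.e. $\nu = (w^{\alpha-1}, w-v+l, u-l, 0^{\beta-1})$. I would then check this matches the claimed summand: when $v \ge u$ we have $(v-u)_+ = v-u$ and $(u-v)_+ = 0$, so $w-v+l = w-(v-u)-u+l$… here one must be a little careful — in fact $(w-(v-u)_+-l', (u-v)_++l')$ with $l' = l-u$ when $v\ge u$ — so a reindexing $l \mapsto l - \min(\cdots)$ is needed to land on the stated normalisation. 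The allowed range of $l$ is cut out by four inequalities: $\nu_{\alpha+1}\ge 0$ gives $l \le u$; $\nu_\alpha \le w$ gives $l \le v$; $\nu_{\alpha+1}\le\nu_\alpha$ (partition condition, equivalently $\nu_{\alpha+1}\le u$ and the ordering) together with $\nu_\alpha \ge u$… and $\nu_\alpha \le w$, $\nu_{\alpha+1}\ge 0$… collectively yield $0 \le l \le \min(u,v,w-u,w-v)$ after the reindexing, which is exactly $r$.

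The step I expect to be the genuine obstacle is the bookkeeping with the $(\cdot)_+$ notation and the shift of summation index: one has to verify that, after eliminating $\nu$ in favour of $l$, the four a priori inequalities really do collapse to the single clean range $0 \le l \le r$ with $r = \min(u,v,w-u,w-v)$, and that the partition written in the statement, $(w^{\alpha-1}, w-(v-u)_+-l, (u-v)_++l, 0^{\beta-1})$, is genuinely a partition (needs $w - (v-u)_+ - l \ge (u-v)_+ + l$, i.e. $2l \le w - |u-v|$, which holds since $l \le \min(w-u,w-v) \le (w-|u-v|)/2$ — wait, more simply $l\le r\le (w-|u-v|)/2$ is not immediate, so this inequality itself must be checked) and that $s_\nu$ does not vanish on $\alpha+\beta \ge (\alpha-1)+2 = \alpha+1$ variables (it has at most $\alpha+1 \le \alpha+\beta$ parts, so it is fine). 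Modulo that case analysis, the proof is a direct application of the Pieri rule; the alternative route — expanding $s_{\lambda/\mu} = \sum s_{\lambda/\nu}\langle\cdot\rangle$ via $\mu$ being a single row and using $s_{(v)} = h_v$, then Jacobi–Trudi — would land in the same place but I would prefer the cleaner Pieri argument above.
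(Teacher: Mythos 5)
Your proposal is correct and takes essentially the same route as the paper: both expand $s_{\lambda/(v)}$ via Pieri's rule and enumerate the partitions $\mu\subseteq\lambda$ with $\lambda/\mu$ a horizontal strip of size $v$, the paper doing this by a pictorial case split $u\le v$ versus $u>v$ while you use the equivalent interlacing conditions, arriving at $\mu=(w^{\alpha-1},w-v+l,u-l,0^{\beta-1})$ with $\max(0,u+v-w)\le l\le\min(u,v)$. The only slips are cosmetic: the correct reindexing is the reflection $l'=\min(u,v)-l$ (not $l'=l-u$), under which the four inequalities collapse to $0\le l'\le r$, and your worry about the summand being a valid partition is vacuous since the interlacing already gives $\mu_\alpha\ge u\ge\mu_{\alpha+1}$.
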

\begin{proof}
From Pieri's rule \cite[Corollary 7.15.9]{stanley_fomin_1999}, we know that for a skew partition $\lambda/\nu$, where $\nu$ is a single-part partition $(v)$,
\begin{equation}
s_{\lambda/\nu}(z)=\sum _{\mu}s_{\mu}(z),
\label{cor7159}
\end{equation}
where the sum ranges over all partitions $\mu \subseteq \lambda$ for which $ \lambda/ \mu$ is a partition with one part of size $v$. In order to prove this lemma we specify the partitions $\lambda$ and $\nu$ as on the left hand side of \eqref{skewtoschur}. The partitions associated with the skew Schur function are 
\begin{equation}
\lambda=(w^{\alpha},u,0^{\beta-1})
\end{equation} 
and
\begin{equation}
\nu=(v,0^{\alpha+\beta-1}).
\end{equation} 
\begin{figure}[h!]
\centering
\includegraphics[width=0.5\textwidth]{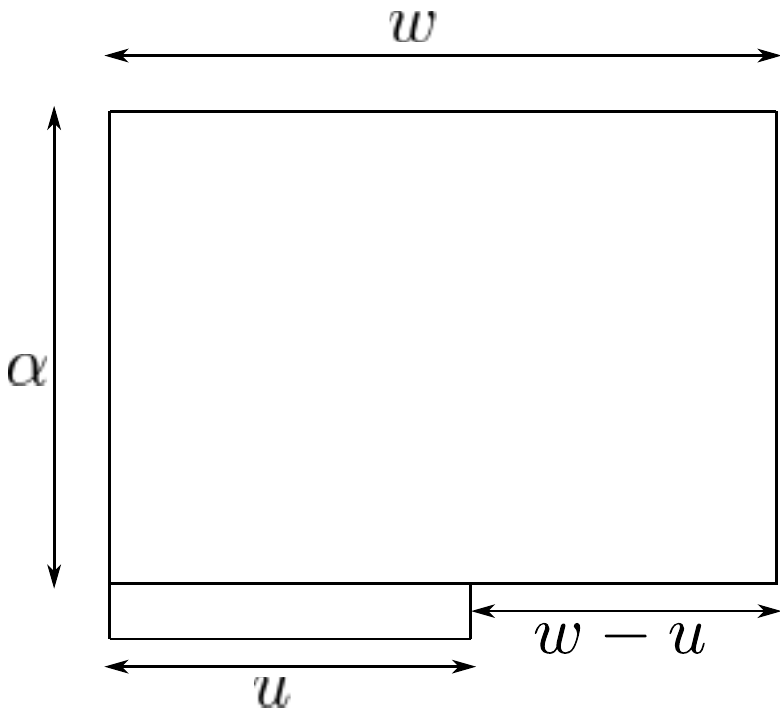}
\caption{A diagram of the partition $\lambda=(w^{\alpha},u,0^{\beta-1})$ occurring in the identity \eqref{cor7159}.}
\label{lpartition}
\end{figure}\\
The aim is to find an explicit expression for all partitions $\mu$ in the sum on the right hand side  of \eqref{cor7159}.
Given a partition $\lambda$ of the shape depicted Figure \ref{lpartition}, we want to find all partitions $\mu$ for which $\lambda/\mu$ is a horizontal strip of size $v$. This can be viewed as removing a strip of size $v$ from $\lambda$ so that the remaining object is still a valid partition. This removal can only be done from the last two rows, as removing anything from above the last two rows will not correspond to the removal of a strip. As the bottom row is of size $u$, the options of removing a strip of size $v$ depend on the size of $u$ and $v$. For this we consider two cases depending on whether the size $v$ of the strip to be removed exceeds the length $u$ of the bottom row or not.
\subsubsection*{Case $u \leq v$:}
\begin{figure}[h!]
\centering
\includegraphics[width=0.5\textwidth]{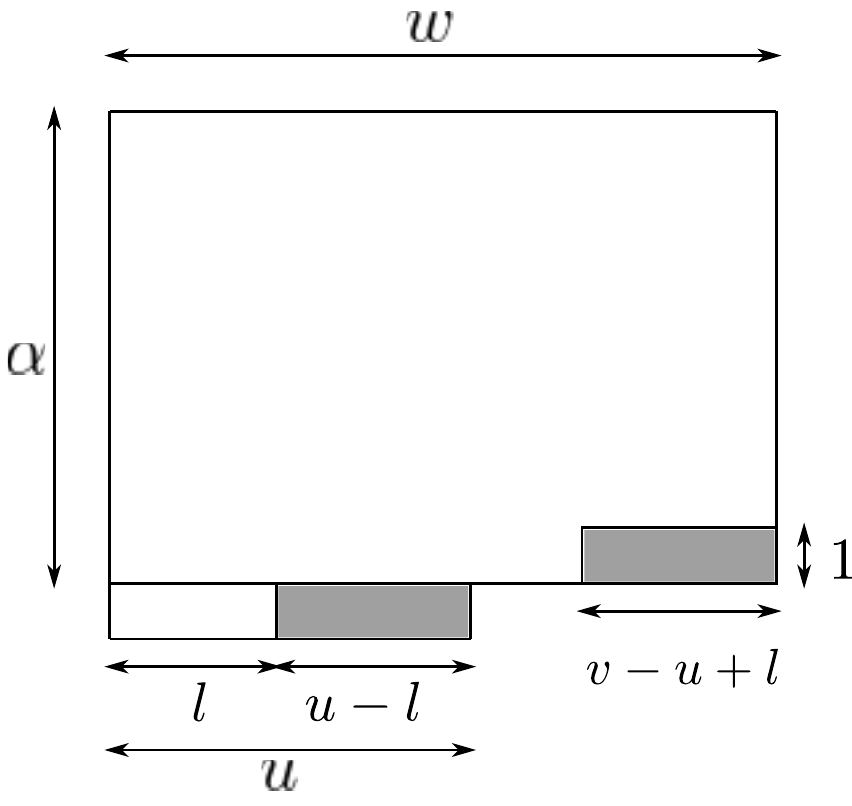}
\caption{A diagram showing the structure of the partition $\mu=(w^{\alpha-1},w-(v-u)-l,l)$ in the case $u\leq v$. The shaded part corresponds to a strip of size $v$.}
\label{proofpic}
\end{figure}
Consider a skew partition $\lambda/\nu$ where $\lambda=(w^{\alpha},u,0^{\beta-1})$ and $\nu={(v,0^{\alpha+\beta-1})}$ as shown in Figure \ref{figurethatneedstobementioned}. If $u\leq v$ then the structure of the partitions $\mu$ appearing in the sum on the right hand side  of \eqref{cor7159} is indicated in Figure \ref{proofpic}. The shaded portion shows the strip $\nu$ to be removed. 
We remove part of $\nu$ from the bottom row of length $u$ and the remaining part from the row above, i.e. we shorten the bottom row by $u-l$ and the row above by $v-u+l$. 
Removing the strip $\nu$ from $\lambda$ gives the following partition
\begin{equation}
\mu=(w^{\alpha-1},w-(v-u)-l,l)\;.
\end{equation}
Here, $l$ is constrained by
$$ l\leq \min(u,w-v).$$
Remembering that $u\leq v$, the sum can therefore be written as claimed,
\begin{equation}
s_{(w^{\alpha},u,0^{\beta-1})/(v,0^{\alpha+\beta-1})}=\sum_{l=0}^{\min(u,v,w-u,w-v)}s_{(w^{\alpha-1},w-(v-u)-l,l,0^{\beta-1})}\;.
\label{schurulv}
\end{equation}
\subsubsection*{Case $u > v$:}
\begin{figure}[h!]
\centering
\includegraphics[width=0.5\textwidth]{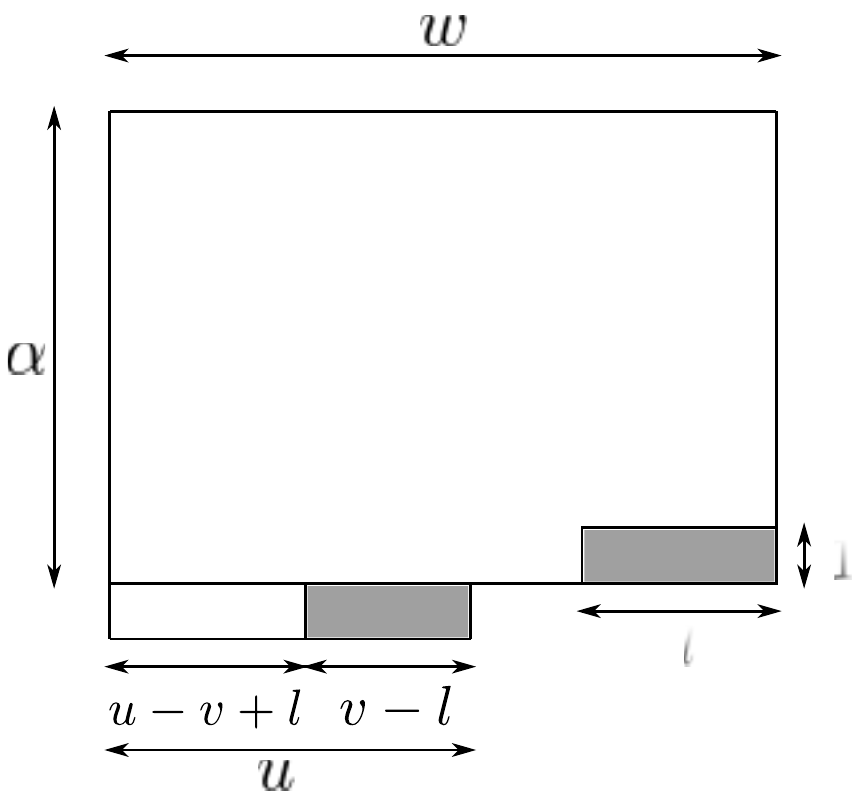}
\caption{A diagram showing the structure of the partition $\mu=(w^{\alpha-1},w-l,u-v+l)$ in the case $u>v$. The shaded part corresponds to a strip of size $v$.}
\label{proofpic2}
\end{figure}
We use the same idea as in the first case and remove strip $\nu$ from the partition $\lambda$. For $v<u$ the structure of the partitions $\mu$ appearing in the sum on the right hand side of \eqref{cor7159} are indicated in Figure \ref{proofpic2}.
Since $v<u$, we can remove $\nu$ completely from the lowest row and nothing from the row above, or we can remove part of it from the lowest row and the rest from the row above. We thus shorten the bottom row by $v-l$ and the row above by $l$. 
Removing the strip $\nu$ from $\lambda$ therefore gives the partition
\begin{equation}
\mu=(w^{\alpha-1},w-l,u-v+l)\;.
\end{equation}
Here, $l$ is constrained by $$ l\leq \min(v,w-u).$$
Remembering that $u>v$, the sum can therefore also be written as claimed,
\begin{equation}
s_{(w^{\alpha},u,0^{\beta-1})/(v,0^{\alpha+\beta-1})}=\sum_{l=0}^{\min(u,v,w-u,w-v)}s_{(w^{\alpha-1},w-l,u-v+l,0^{\beta-1})}
\label{schurvlu}
\end{equation}
Taken together, \eqref{schurulv} and \eqref{schurvlu} prove the lemma.
\end{proof}
We now use this Lemma to state the desired equivalent result for Theorem \ref{theorem1} in terms of Schur functions. Note that while in Lemma \ref{lemma} we did not need to specify the arguments of the functions, here it is important that the arguments are given by the kernel roots.

\begin{proof}[Proof of Corollary \ref{schurcorol}]
Lemma $\ref{lemma}$ proves the corollary.
\end{proof}

\section{Examples}

We now present several special cases involving small values of $\alpha$ and $\beta$. The first case we examine is 
$(\alpha,\beta)=(1,1)$, which corresponds to weighted Motzkin paths, and also includes Dyck paths as a special case, if the weight of the horizontal step is set to $p_0=0$. This has been studied previously \cite{JOSUATVERGES20112064} \cite{CHEN2008329}, but the Schur function approach used here is different and focusses more on the structure of the problem than just giving explicit generating functions. We then examine the cases $(\alpha,\beta)=(1,2)$ and $(\alpha,\beta)=(2,1)$, the solution of which involves roots of cubic equations. Here, the strength of our Schur function approach becomes apparent, as any explicit solution involves cumbersome algebraic expressions. 

\subsection{Motzkin paths}

Theorem \ref{theorem1} shows that the geometric structure of the problem is encoded in the partition shapes, while the step weights are ``hidden'' in the kernel roots. For Motzkin paths the result is particularly simple and elegant, involving only partitions with two parts,
\begin{equation}
G_{(u,v)}^{w,1,1}(t)
=
\frac{1}{tp_1}\frac{s_{(w,u)/(v,0)}(z_1,z_2)}{s_{(w+1,0)}(z_1,z_2)}\;.
\label{motzskewschur}
\end{equation}
From a computational point of view, skew Schur functions are of course not that easy to evaluate, but with the help of Corollary \ref{schurcorol} we are able to state the result in terms of Schur functions,
\begin{equation}
G_{(u,v)}^{w,1,1}(t)
=
\frac{1}{tp_1}\frac{\sum\limits_{l=0}^{r}s_{(w-(v-u)_+-l,(u-v)_++l)}(z_1,z_2)}{s_{(w+1,0)}(z_1,z_2)}.
\label{motzschur}
\end{equation}
To expand the Schur functions we write them in terms of determinants. The Schur function in the denominator of Equation \eqref{motzschur} is given by
\begin{align}
s_{(w+1,0)}(z_1,z_2)=&\frac{1}{\Delta}\begin{vmatrix}
z_1^{w+2} & z_2^{w+2}\\
z_1^0 & z_2^0
\end{vmatrix}\\
=&\frac{1}{\Delta}
(z_1^{w+2}-z_2^{w+2}).
\end{align}
where $\Delta=\Delta(z_1,z_2)=z_1-z_2$ comes from a Vandermonde determinant evalution. 
Similarly expressing the Schur function in the numerator of Equation \eqref{motzschur} as a determinant implies
\begin{multline}
s_{(w-(v-u)_+-l,(u-v)_++l)}(z_1,z_2)=\frac{1}{\Delta}\begin{vmatrix}
z_1^{w-(v-u)_+-l+1} & z_2^{w-(v-u)_+-l+1}\\
z_1^{(u-v)_++l} & z_2^{(u-v)_++l}
\end{vmatrix}\\
=\frac{1}{\Delta}(
z_1^{w-(v-u)_+-l+1}z_2^{(u-v)_++l} - z_2^{w-(v-u)_+-l+1}z_1^{(u-v)_++l}
).
\end{multline}
Now substituting the expansion of these Schur functions into \eqref{motzschur}, we finally obtain
\begin{equation}
G_{(u,v)}^{w,1,1}(t)=\frac{1}{tp_1}\dfrac{\sum\limits_{l=0}^{r}(
z_1^{w-(v-u)_+-l+1}z_2^{(u-v)_++l} - z_2^{w-(v-u)_+-l+1}z_1^{(u-v)_++l}
)}{
z_1^{w+2}-z_2^{w+2}}
\end{equation}
Here, $z_1=z_1(t)$ and $z_2=z_2(t)$ are the roots of the kernel $K(t,z)=1-tp_0-tp_1z-tq_1/z$, so that they can be explicitly given as solutions of the quadratic equation
\begin{equation}
z^2-\frac{1/t-p_0}{p_1}z+\frac{q_1}{p_1}=0\;.
\label{motzkernel}
\end{equation}

\subsection{Case ($\alpha=1$, $\beta=2$)}
Structurally, this case is rather similar to the preceding one, however the Schur functions now have as argument three kernel roots $z_1(t), z_2(t)$ and $z_3(t)$, which are the solution to the kernel equation given by
\begin{equation}
z^3-\frac{1/t-p_0}{p_1}z^2+\frac{q_1}{p_1}z+\frac{q_2}{p_1}=0\;,
\end{equation}
so that a general explicit solution would involve roots of a cubic equation.
Theorem \ref{theorem1} implies that
\begin{equation}
G_{(u,v)}^{w,1,2}(t)=\frac{1}{tp_{1}}\frac{s_{(w,u,0)/(v,0,0)}(z_1,z_2,z_3)}{s_{(w+1,0,0)}(z_1,z_2,z_3)}\;,
\label{motzskweschur12}
\end{equation}
and the result given in Corollary \ref{schurcorol} can be written as
\begin{equation}
G_{(u,v)}^{w,1,2}(t)=\frac{1}{tp_1}\frac{\sum\limits_{l=0}^{r}s_{(w-(v-u)_+-l,(u-v)_++l,0)}(z_1,z_2,z_3)}{s_{(w+1,0,0)}(z_1,z_2,z_3)}\;.
\label{motzschur12}
\end{equation}
We expand the Schur functions and write them in form of determinants. The Schur function in the denominator is given by
\begin{multline}
s_{(w+1,0,0)}(z_1,z_2,z_3)=\frac{1}{\Delta}\begin{vmatrix}
z_1^{w+3} & z_2^{w+3} & z_3^{w+3}\\
z_1^1 & z_2^1 & z_3^1\\
z_1^0 & z_2^0 & z_3^0
\end{vmatrix}\\
=\frac{1}{\Delta}
(z_1^{w+3}(z_2-z_3)-z_2^{w+3}(z_1-z_3)+z_3^{w+3}(z_1-z_2))\;,
\end{multline}
where $\Delta=(z_1-z_2)(z_1-z_3)(z_2-z_3)$ is again a Vandermonde determinant (which will however cancel out in the final result).
Similarly expressing the Schur function in the numerator as a determinant implies
\begin{multline}
s_{(w-(v-u)_+-l,(u-v)_++l,0)}(z_1,z_2,z_3)
=\\
\frac{1}{\Delta}\left(
z_1^{w-(v-u)_+-l+2}(z_2^{(u-v)_++l+1}-z_3^{(u-v)_++l+1})\right.\\
\hspace*{1.5cm}- z_2^{w-(v-u)_+-l+2}(z_1^{(u-v)_++l+1}-z_3^{(u-v)_++l+1})\\
\left. +z_3^{w-(v-u)_+-l+2}(z_1^{(u-v)_++l+1}-z_2^{(u-v)_++l+1})
\right).
\end{multline}
Now substituting the expansion of Schur functions in \eqref{motzschur12}, we obtain
\begin{multline}
G_{(u,v)}^{w,1,2}(t)=\\
\frac{1}{tp_1} 
\dfrac{\mathlarger{\mathlarger{\mathlarger{\sum}}}\limits_{l=0}^r\left(
   \splitdfrac{\splitdfrac{z_1^{w-(v-u)_+-l+2}(z_2^{(u-v)_++l+1}-z_3^{(u-v)_++l+1})}
                                   {- z_2^{w-(v-u)_+-l+2}(z_1^{(u-v)_++l+1}-z_3^{(u-v)_++l+1})}}{+z_3^{w-(v-u)_+-l+2}(z_1^{(u-v)_++l+1}-z_2^{(u-v)_++l+1})}
\right)}
{z_1^{w+3}(z_2-z_3)-z_2^{w+3}(z_1-z_3)+z_3^{w+3}(z_1-z_2)}.
\label{12final}
\end{multline}

\subsection{Case ($\alpha=2$, $\beta=1$)}
The kernel equation now leads to
\begin{equation}
z^3+\frac{p_1}{p_2}z^2-\frac{1/t-p_0}{p_2}z+\frac{q_1}{p_2}=0\;.
\end{equation}
We note that exchanging $\alpha$ and $\beta$ is akin to switching up and down steps with adjusting the weights appropriately. More precisely, making all the parameters explicit we have
\begin{equation}
K^{(2,1)}_{p_0,p_1,p_2,q_1}(t,z)=K^{(1,2)}_{p_0,q_1,q_2,p_1}(t,1/z)\;,
\end{equation}
which in the case of unit weights implies that the kernel roots for $(\alpha,\beta)=(2,1)$ and $(\alpha,\beta)=(1,2)$ are simply inverses of each other. This symmetry is not as explicit when writing the generating functions in terms of Schur functions. Symmetry considerations would dictate that we need to replace $u$ and $v$ by $w-u$ and $w-v$, respectively, but this is not obvious from the result given in Theorem \ref{theorem1}, which now reads
\begin{equation}
G_{(u,v)}^{w,2,1}(t)=-\frac{1}{tp_{2}}\frac{s_{(w,w,u)/(v,0,0)}(z_1,z_2,z_3)}{s_{(w+1,w+1,0)}(z_1,z_2,z_3)}\;.
\label{motzskweschur21}
\end{equation}
From Corollary \ref{schurcorol}, this can be written as
\begin{equation}
G_{(u,v)}^{w,2,1}(t)=-\frac{1}{tp_2}\frac{\sum\limits_{l=0}^{r}s_{(w,w-(v-u)_+-l,(u-v)_++l)}(z_1,z_2,z_3)}{s_{(w+1,w+1,0)}(z_1,z_2,z_3)}\;.
\label{motzschur21}
\end{equation}
We expand the Schur functions and write them in form of determinants,
and we obtain
\begin{multline}
G_{(u,v)}^{w,2,1}(t)=
-\frac{1}{tp_2}\times \\
\dfrac{\mathlarger{\mathlarger{\mathlarger{\sum}}}\limits_{l=0}^r\left(
   \splitdfrac{\splitdfrac{z_1^{w+2}(z_2^{w-(v-u)_+-l+1}z_3^{(u-v)_++l}-z_3^{w-(v-u)_+-l+1}z_2^{(u-v)_++l})}
                                   {- z_2^{w+2}(z_1^{w-(v-u)_+-l+1}z_3^{(u-v)_++l}-z_3^{w-(v-u)_+-l+1}z_1^{(u-v)_++l})}}{+z_3^{w+2}(z_1^{w-(v-u)_+-l+1}z_2^{(u-v)_++l}-z_2^{w-(v-u)_+-l+1}z_1^{(u-v)_++l})}
\right)}
{z_1^{w+3}(z_2^{w+2}-z_3^{w+2})-z_2^{w+3}(z_1^{w+2}-z_3^{w+2})+z_3^{w+3}(z_1^{w+2}-z_2^{w+2})}.
\label{21final}
\end{multline}
When written in terms of kernel roots, we see some structural similarity between \eqref{21final} and \eqref{12final}, in line with the symmetry observation made above. Obviously a more general study of the effect of symmetry would be an interesting topic for further work.

\end{document}